\newtheorem{propo}{{\bf Proposition}}[section]
\newtheorem{coro}[propo]{{\bf Corollary}}
\newtheorem{lemma}[propo]{{\bf Lemma}} \newtheorem{theor}[propo]{{\bf
Theorem}} \newtheorem{ex}{{\sc Example}}[section]
\newenvironment{proof}{{\bf Proof.}}{$\Box$}
\def\Q{{\mathbb Q}}
\def\Z{{\mathbb Z}}
\def\N{{\mathbb N}}
\begin{document}

\vspace*{1.0in}

\begin{center} C-SECTIONS OF LIE ALGEBRAS 
\end{center}
\bigskip

\begin{center} DAVID A. TOWERS 
\end{center}
\bigskip

\begin{center} Department of Mathematics and Statistics
 
Lancaster University

Lancaster LA1 4YF

England

d.towers@lancaster.ac.uk 
\end{center}
\bigskip

\begin{abstract} Let $M$ be a maximal subalgebra of a Lie algebra $L$ and $A/B$ a chief factor of $L$ such that $B \subseteq M$ and $A \not \subseteq M$. We call the factor algebra $M \cap A/B$ a $c$-section of $M$. All such $c$-sections are isomorphic, and this concept is related those of $c$-ideals and ideal index previously introduced by the author. Properties of $c$-sections are studied and some new characterizations of solvable Lie algebras are obtained.
\par 
\noindent {\em Mathematics Subject Classification 2000}: 17B05, 17B20, 17B30, 17B50.
\par
\noindent {\em Key Words and Phrases}: c-section, c-ideal, ideal index, primitive, solvable, nilpotent, nil, restricted Lie algebra. 
\end{abstract}

\section{Preliminary results}
Throughout $L$ will denote a finite-dimensional Lie algebra over a field $F$. We denote algebra direct sums by `$\oplus$', whereas vector space direct sums will be denoted by `$\dot{+}$'. If $B$ is a subalgebra of $L$ we define $B_L$, the {\em core} (with respect to $L$) of $B$ to be the largest ideal of $L$ contained in $B$. In \cite{cideal} we defined a subalgebra $B$ of $L$ to be a {\em c-ideal} of $L$ if there is an ideal $C$ of $L$ such that $L=B+C$ and $B \cap C \subseteq B_L$. 
\par

Let $M$ be a maximal subalgebra of $L$. We say that a chief factor $C/D$ of $L$ {\em supplements} $M$ in $L$ if $L=C+M$ and $D \subseteq C \cap M$; if $D=C \cap M$ we say that $C/D$ {\em complements} $M$ in $L$. In \cite{idealindex} we defined the {\em ideal index} of a maximal subalgebra $M$ of $L$, denoted by $\eta(L:M)$, to be the well-defined dimension of a chief factor $C/D$ where $C$ is an ideal minimal with respect to supplementing $M$ in $L$. Here we introduce a further concept which is related to the previous two.
\par

Let $M$ be a maximal subalgebra of $L$ and let $C/D$ be a chief factor of $L$ with $D \subseteq M$ and $L=M+C$. Then $(M \cap C)/D$ is called a {\em c-section} of $M$ in $L$. The analogous concept for groups was introduced by Wang and Shirong in \cite{w-s} and studied further by Li and Shi in \cite{l-s}.
\par

We say that $L$ is {\em primitive} if it has a maximal subalgebra $M$ with $M_L=0$. First we show that all c-sections of $M$ are isomorphic.

\begin{lemma}\label{l:unique} For every maximal subalgebra $M$ of $L$ there is a unique c-section up to isomorphism.
\end{lemma}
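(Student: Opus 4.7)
The plan is to reduce to the primitive case by quotienting by $M_L$, and then to use that in a primitive Lie algebra two distinct minimal ideals must centralise one another.

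First I would pin down the bottom term of a chief factor: if $C/D$ satisfies $D \subseteq M$ and $L = M + C$, then $D$ is an ideal of $L$ inside $M$, so $D \subseteq M_L$, whence $D \subseteq M_L \cap C$; on the other hand $M_L \cap C$ is an ideal of $L$ strictly below $C$ (since $C \not\subseteq M \supseteq M_L$), so the chief-factor property yields $D = M_L \cap C$. Combining this with the second isomorphism theorem and the modular law ($M_L \subseteq M$) gives
\[ (M \cap C)/D \;\cong\; \bigl((M\cap C)+M_L\bigr)/M_L \;=\; \bigl(M\cap(C+M_L)\bigr)/M_L \;=\; \bar M \cap \bar C, \]
where overlines denote images in $\bar L = L/M_L$. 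Routine verifications show that $\bar M$ is a core-free maximal subalgebra of $\bar L$ and that $\bar C$ is a minimal ideal of $\bar L$ with $\bar L = \bar M + \bar C$.

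Given two c-sections $(M\cap C_i)/D_i$, $i = 1,2$, the task reduces to showing $\bar M \cap \bar C_1 \cong \bar M \cap \bar C_2$ for minimal ideals $\bar C_1, \bar C_2$ of $\bar L$ supplementing $\bar M$. If $\bar C_1 = \bar C_2$ there is nothing to prove. Otherwise $\bar C_1 \cap \bar C_2$ is an ideal of $\bar L$ strictly contained in each $\bar C_i$, hence zero, and thus $[\bar C_1,\bar C_2] \subseteq \bar C_1 \cap \bar C_2 = 0$. Writing $\bar L = \bar M + \bar C_2$ then gives
\[ [\bar L, \bar M\cap \bar C_1] \;=\; [\bar M, \bar M\cap\bar C_1] + [\bar C_2, \bar M\cap\bar C_1] \;\subseteq\; (\bar M\cap\bar C_1) + 0, \]
so $\bar M \cap \bar C_1$ is itself an ideal of $\bar L$ sitting inside $\bar M$; core-freeness of $\bar M$ forces $\bar M\cap\bar C_1 = 0$, and by symmetry $\bar M\cap\bar C_2 = 0$, so both c-sections vanish.

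The main obstacle is the identification $D = M_L \cap C$ that legitimises the reduction to the primitive quotient $\bar L$; without it one cannot cleanly replace arbitrary chief factors supplementing $M$ by minimal ideals of a primitive Lie algebra. Once this is in hand, the centraliser identity $[\bar C_1,\bar C_2]=0$ together with the decomposition $\bar L = \bar M + \bar C_j$ ($j\neq i$) makes $\bar M \cap \bar C_i$ visibly an ideal of $\bar L$, and core-freeness then closes the argument.
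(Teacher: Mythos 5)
Your proposal is correct. The first half — normalising the bottom of the chief factor to $M_L$ via $D=M_L\cap C$, the modular law and the second isomorphism theorem — is exactly the paper's reduction. Where you genuinely diverge is in the final step: the paper disposes of the case of two distinct supplementing chief factors $C_1/M_L\neq C_2/M_L$ by quoting its structure theorem for primitive Lie algebras (Theorem 1.1 of the cited paper on maximal subalgebras and chief factors), which asserts that in that situation $C_1\cap M=M_L=C_2\cap M$, so both c-sections are trivial. You instead prove this directly: distinct minimal ideals of $\bar L=L/M_L$ intersect trivially, hence centralise each other, and then $[\bar L,\bar M\cap\bar C_1]=[\bar M+\bar C_2,\bar M\cap\bar C_1]\subseteq\bar M\cap\bar C_1$ exhibits $\bar M\cap\bar C_1$ as an ideal of $\bar L$ inside the core-free subalgebra $\bar M$, forcing it to vanish. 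Your route is self-contained and more elementary — it needs nothing beyond the definition of a chief factor and core-freeness, and in effect isolates the only part of the primitive structure theorem that the lemma actually uses; the paper's citation buys brevity and the extra (unneeded here) information that $C_1/M_L\cong C_2/M_L$. The one point you wave at as ``routine,'' namely that $\bar C=(C+M_L)/M_L$ is a minimal ideal of $\bar L$, does deserve the one-line correspondence/modular-law check (an ideal $M_L\subseteq K\subseteq C+M_L$ satisfies $K=(K\cap C)+M_L$, and $K\cap C\in\{D,C\}$), but this is the same assertion the paper itself makes without proof.
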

\begin{proof} Clearly c-sections exist. Let $(M \cap C)/D$ be a c-section of $M$ in $L$, where $C/D$ is a chief factor of $L$, $D \subseteq M$ and $L=M+C$. First we show that this c-section is isomorphic to one in which $D=M_L$. Clearly $D \subseteq M_L \cap C \subseteq C$, so either $M_L \cap C = C$ or $M_L \cap C=D$. If the former holds, then $C \subseteq M_L$, giving $L=M$, a contradiction. In the latter case put $E=C+M_L$. Then $E/M_L \cong C/D$ is a chief factor and $(M \cap E)/M_L$ is a c-section. Moreover, 
\[ \frac{M \cap E}{M_L} = \frac{M_L +M \cap C}{M_L} \cong \frac{M \cap C}{M_L \cap C} = \frac{M \cap C}{D}.
\]

So suppose that $(M \cap C_1)/M_L$ and $(M \cap C_2)/M_L$ are two c-sections, where $C_1/M_L$, $C_2/M_L$ are chief factors and $L=M+C_1=M+C_2$. Then $L/M_L$ is primitive and so either $C_1=C_2$ or else $C_1/M_L \cong C_2/M_L$ and $C_1 \cap M = M_L = C_2 \cap M$, by \cite[Theorem 1.1]{primitive}. In the latter case both c-sections are trivial. 
\end{proof}
\bigskip

Given a Lie algebra $L$ with a maximal subalgebra $M$ we define $Sec(M)$ to be the Lie algebra which is isomorphic to any c-section of $M$; we call the natural number $\eta^*(L:M) = \dim Sec(M)$ the {\em c-index} of $M$ in $L$.
\par

The relationship between c-ideals and c-sections, and between ideal index and c-index, for a maximal subalgebra $M$ of $L$ is given by the following lemma.

\begin{lemma} Let $M$ be a maximal subalgebra of a Lie algebra $L$. Then 
\begin{itemize}
\item[(i)] $M$ is a c-ideal of $L$ if and only if $Sec(M)=0$; and
\item[(ii)] $\eta^*(L:M) = \eta(L:M)- \dim(L/M)$.
\end{itemize}
\end{lemma}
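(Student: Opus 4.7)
The plan is to treat parts (i) and (ii) separately, using Lemma \ref{l:unique} throughout to identify any conveniently chosen c-section with $Sec(M)$. The reverse direction of (i) is immediate: if some c-section $(M \cap C)/D$ vanishes, then $M \cap C = D$ is an ideal of $L$ contained in $M$, hence in $M_L$, and $L = M + C$ then exhibits $C$ as a witness that $M$ is a c-ideal.

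For the forward direction of (i), I would start from an ideal $C$ witnessing the c-ideal property and then choose an ideal $A$ of $L$ minimal subject to the two constraints $A \subseteq C$ and $L = M + A$. The key observation is that maximality of $M$ together with the minimal choice of $A$ forces every proper ideal of $L$ strictly contained in $A$ to lie in $M$: for such a $B$ one cannot have $M + B = L$ (minimality would be violated), so $M + B = M$. Applying this to $M_L \cap A$, which is a proper ideal of $L$ inside $A$ (it cannot equal $A$, else $L = M$), I obtain $M_L \cap A \subseteq M$; together with the inclusion $M \cap A \subseteq M \cap C \subseteq M_L$ this gives $M \cap A = M_L \cap A$, so $M \cap A$ is itself an ideal of $L$. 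The same minimality observation rules out any $L$-ideal strictly between $M \cap A$ and $A$, so $A/(M \cap A)$ is a chief factor and $(M \cap A)/(M \cap A) = 0$ is a genuine c-section of $M$; Lemma \ref{l:unique} then gives $Sec(M) = 0$.

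For (ii), I would take $C$ to be an ideal of $L$ minimal with respect to supplementing $M$, so that $C/D$ is a chief factor with $L = M + C$, $D \subseteq M \cap C$, and $\eta(L:M) = \dim(C/D)$. Then $(M \cap C)/D$ is itself a c-section of $M$, so by Lemma \ref{l:unique} we have $\eta^*(L:M) = \dim(M \cap C) - \dim D$, while the second isomorphism theorem applied to $L = M + C$ gives $\dim(L/M) = \dim C - \dim(M \cap C)$. The identity in (ii) then drops out by subtraction.

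The only genuinely non-obvious step is the forward direction of (i), specifically the realisation that the denominator $M \cap A$ arising from the minimal choice of $A$ is automatically an $L$-ideal and that $A/(M \cap A)$ is automatically a chief factor; once this is in hand, both parts reduce to Lemma \ref{l:unique} and routine dimension bookkeeping.
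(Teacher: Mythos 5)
Your proof is correct, and parts (ii) and the reverse direction of (i) coincide with the paper's argument (the paper simply declares the converse of (i) ``clear'' and does the same dimension bookkeeping in (ii)). The one place you diverge is the forward direction of (i): the paper works with the original witness $C$ and shows by a dimension count that any ideal $K$ with $M \cap C \subset K \subseteq C$ forces $\dim K = \dim C$, so that $C/(M\cap C)$ is already a chief factor; you instead pass to an ideal $A \subseteq C$ minimal subject to supplementing $M$ and use maximality of $M$ to push every $L$-ideal properly contained in $A$ into $M$, which gives the chief factor $A/(M\cap A)$. Both verifications are elementary and each buys the same thing, namely a chief factor whose associated c-section is zero; yours is the more standard ``minimal supplement'' argument and generalises more readily, while the paper's avoids making a new choice of ideal. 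One small redundancy in your write-up: the deduction $M_L \cap A \subseteq M$ is automatic from $M_L \subseteq M$, and the equality $M \cap A = M_L \cap A$ already follows from $M \cap A \subseteq M\cap C \subseteq M_L$ together with $M_L\subseteq M$, so the ``key observation'' is only genuinely needed for the final step ruling out intermediate $L$-ideals between $M \cap A$ and $A$. This does not affect correctness.
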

\begin{proof}
\begin{itemize}
\item[(i)] Suppose first that $M$ is a c-ideal of $L$. Then there is an ideal $C$ of $L$ such that $L=M+C$ and $M \cap C \subseteq M_L$. Then $M \cap C = M_L \cap C$ is an ideal of $L$. Let $K$ be an ideal of $L$ with $M \cap C \subset K \subseteq C$. Then $K \not \subseteq M$, so $L=M+K$ and $M \cap C = M \cap K$. This yields that
$\dim L = \dim M + \dim K - \dim (M \cap K) = \dim M + \dim C - \dim (M \cap C),
$   so $K=C$ and $C/(M \cap C)$ is a chief factor of $L$. It follows that $Sec(M)=0$.
\par

The converse is clear.
\item[(ii)] Let $C/D$ be a chief factor such that $L=M+C$ and $C$ is minimal in the set of ideals supplementing $M$ in $L$. Then $\eta(L:M) = \dim(C/D)$, by the definition of ideal index. Thus,
\begin{align} \eta(L:M) & = \dim(C/D) = \dim C - \dim D \nonumber \\
                                      &  = \dim C - \dim C \cap M + \dim C \cap M - \dim D \nonumber \\
                                      &   = \dim L - \dim M + \dim(C \cap M/D)  \nonumber \\
                                      &  = \dim(L/M) + \eta^*(L:M).  \nonumber
\end{align}
\end{itemize}
\end{proof}

\begin{lemma}\label{l:supp} Let $A/B$ be an abelian chief factor of $L$. Then any maximal subalgebra of $L$ that supplements $A/B$ must complement $A/B$.
\end{lemma}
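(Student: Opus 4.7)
The plan is to exploit the abelianness of $A/B$ to show that $M \cap A$ is itself an ideal of $L$, and then use the chief-factor condition to pin down which ideal it must be.

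First I would set up notation: let $M$ be a maximal subalgebra of $L$ with $L = M + A$ and $B \subseteq M \cap A$ (so $M$ supplements $A/B$), and the goal is $M \cap A = B$.

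The main step is to check that $M \cap A$ is an ideal of $L$. Using $L = M + A$, it suffices to show that both $[M, M \cap A]$ and $[A, M \cap A]$ lie in $M \cap A$. The first bracket sits in $M$ because $M$ is a subalgebra, and it sits in $A$ because $A$ is an ideal of $L$, so it lies in $M \cap A$. The second bracket sits in $[A,A] \subseteq B$ since $A/B$ is abelian, and $B \subseteq M \cap A$ by hypothesis. This is the only real content of the argument and should go through without difficulty.

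Finally I would finish by the chief-factor dichotomy: $M \cap A$ is an ideal of $L$ with $B \subseteq M \cap A \subseteq A$, and $A/B$ is a chief factor, so $M \cap A$ equals either $B$ or $A$. The case $M \cap A = A$ would force $A \subseteq M$ and hence $L = M + A = M$, contradicting maximality (properness) of $M$. Thus $M \cap A = B$, which is exactly the statement that $A/B$ complements $M$.

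I do not anticipate any real obstacle: the only subtlety is remembering to use the abelianness hypothesis precisely where the bracket $[A, M \cap A]$ appears, and to rule out $M \cap A = A$ using that $M \neq L$.
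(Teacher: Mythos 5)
Your proposal is correct and follows essentially the same route as the paper: show $[L, M\cap A] = [M+A, M\cap A] \subseteq M\cap A$ using that $A/B$ is abelian, conclude $M\cap A$ is an ideal, and invoke the chief-factor dichotomy. You merely spell out the final step (ruling out $M\cap A = A$) a little more explicitly than the paper does.
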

\begin{proof} Let $M$ supplement $A/B$, so $L=A+M$ and $B \subseteq M$. Then $[L,M \cap A]=[A+M,M \cap A] \subseteq B+M \cap A = M \cap A$. So $M \cap A$ is an ideal of $L$ and $M \cap A=B$.
\end{proof}
\bigskip

The following lemma will also be useful.

\begin{lemma}\label{l:factor} Let $B \subseteq M \subseteq L$, where $M$ is maximal in $L$ and $B$ is an ideal of $L$. Then $Sec(M) \cong Sec(M/B)$.
\end{lemma}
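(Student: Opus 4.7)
The plan is to use Lemma \ref{l:unique} to normalize the c-section of $M$ so that its ``denominator'' contains $B$, after which the isomorphism follows from an application of the correspondence theorem.

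First I would note that since $B \subseteq M \subsetneq L$, the correspondence theorem shows that $M/B$ is a maximal subalgebra of $L/B$, so $Sec(M/B)$ makes sense. Next, by Lemma \ref{l:unique} I may represent $Sec(M)$ as $(M \cap C)/M_L$ for some chief factor $C/M_L$ of $L$ with $M_L \subseteq C$ and $L = M+C$. This normalization is the key move: because $B$ is an ideal of $L$ contained in $M$, we automatically have $B \subseteq M_L$, and therefore $B \subseteq M_L \subseteq C$ and $B \subseteq M \cap C$.

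Having arranged $B$ to sit inside the relevant subspaces, I would pass to $L/B$. Under the correspondence theorem, $(C/B)/(M_L/B)$ is a chief factor of $L/B$ isomorphic to $C/M_L$, we have $M_L/B \subseteq M/B$, and $L/B = M/B + C/B$. Thus $(C/B)/(M_L/B)$ supplies a c-section of $M/B$ in $L/B$, and by Lemma \ref{l:unique} applied to $L/B$,
\[
Sec(M/B) \;\cong\; \frac{(M/B)\cap (C/B)}{M_L/B} \;=\; \frac{(M\cap C)/B}{M_L/B} \;\cong\; \frac{M\cap C}{M_L} \;=\; Sec(M).
\]

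There is no real obstacle here; the only point requiring care is that the naive choice of chief factor $C/D$ supplementing $M$ need not satisfy $B \subseteq D$, so one must invoke Lemma \ref{l:unique} at the outset to replace $D$ by $M_L$. Once that normalization is in hand, everything else is bookkeeping with the correspondence theorem and the second isomorphism theorem.
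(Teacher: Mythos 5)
Your proof is correct. It uses the same underlying mechanism as the paper---the correspondence between chief factors of $L$ lying above $B$ and chief factors of $L/B$, combined with the uniqueness of c-sections (Lemma \ref{l:unique})---but runs it in the opposite direction. The paper starts from a c-section of $M/B$ in $L/B$: any chief factor $(C/B)/(D/B)$ of $L/B$ that supplements $M/B$ automatically corresponds to a chief factor $C/D$ of $L$ with $B \subseteq D \subseteq M$ supplementing $M$, so no normalization is needed and the whole argument is three lines. You instead start from a c-section of $M$ in $L$, and must first arrange for its denominator to contain $B$; you handle this correctly by normalizing the denominator to $M_L$ and observing that $B \subseteq M_L$ because $M_L$ is the largest ideal of $L$ inside $M$. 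The only quibble is one of citation: the statement of Lemma \ref{l:unique} asserts only uniqueness up to isomorphism, whereas the existence of a representative c-section with denominator exactly $M_L$ is established inside its proof rather than in its statement, so strictly you are invoking that proof (or should re-derive the normalization) rather than the lemma as stated. With that understood, every step---the correspondence of chief factors, the identity $(M/B)\cap(C/B)=(M\cap C)/B$, and the final application of the third isomorphism theorem---checks out.
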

\begin{proof} Clearly $M/B$ is a maximal subalgebra of $L/B$. Let $(C/B)/(D/B)$ be a chief factor of $L/B$ such that $D/B \subseteq M/B$ and $C/B + M/B = L/B$. Then $C/D$ is a chief factor of $L$ such that $L=C+M$ and $D \subseteq M$. Hence $Sec(M) \cong C \cap M/D \cong Sec(M/B)$.
\end{proof}
\bigskip

 In \cite{primitive} it was shown that a primitive Lie algebra can be one of three types: it is said to be 
\begin{itemize}
\item[1.] {\em primitive of type $1$} if it has a unique minimal ideal that is abelian;
\item[2.] {\em primitive of type $2$} if it has a unique minimal ideal that is  non-abelian; and
\item[3.] {\em primitive of type $3$} if it has precisely two distinct minimal ideals each of which is  non-abelian.
\end{itemize}
If $M$ is a maximal subalgebra of $L$, then $L/M_L$ is clearly primitive; we say that $M$ is of type $i$ if $L/M_L$ is primitive of type $i$ for $i=1,2,3$. Then we have the following result.

\begin{lemma}\label{l:prim} Let $L$ be a Lie algebra over a field $F$ and let $M$ be a maximal subalgebra of $L$.
\begin{itemize}
\item[(i)] If $M$ is of type $1$ or $3$ then Sec$(M)=0$.
\item[(ii)] If $F$ has characteristic zero and $M$ is of type $2$ then Sec$(M) \cong M/M_L$.
\end{itemize}
\end{lemma}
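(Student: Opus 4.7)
The plan is to use Lemma~\ref{l:factor} to reduce to the case $M_L=0$, so that $L$ is itself primitive; setting $B=M_L$ gives $Sec(M)\cong Sec(M/M_L)$, and by definition $M/M_L$ has the same type in $L/M_L$ as $M$ does in $L$. After this reduction, Lemma~\ref{l:unique} lets me realise every c-section of $M$ in the form $M\cap C$, where $C$ is a minimal ideal of $L$ with $L=M+C$.

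For part~(i) I would split on type. If $M$ is of type~$1$, let $A$ be the unique, abelian, minimal ideal of $L$; then $A\not\subseteq M_L=0$, so maximality of $M$ forces $L=M+A$, and since $A/0$ is an abelian chief factor, Lemma~\ref{l:supp} gives $M\cap A=0$. If instead $M$ is of type~$3$, let $A$ and $B$ be the two non-abelian minimal ideals; neither lies in $M_L=0$, so both $L=M+A$ and $L=M+B$ hold, producing two distinct choices $C_1=A$, $C_2=B$ of supplementing minimal ideal. Since $C_1\neq C_2$, the dichotomy established inside the proof of Lemma~\ref{l:unique} (via \cite[Theorem~1.1]{primitive}) then forces $M\cap A=M_L=0$. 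In both cases $Sec(M)=0$.

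For part~(ii), with $\mathrm{char}\,F=0$ and $M$ of type~$2$, the key step will be to prove that the unique non-abelian minimal ideal $A$ equals $L$, for then $M\cap A=M$ and $Sec(M)\cong M=M/M_L$, as wanted. To show $L=A$, I would first argue that $A$ is semisimple: its solvable radical is characteristic in $A$, and since $L$ acts on $A$ by derivations via the adjoint representation it is an $L$-invariant ideal, so being a proper ideal of the minimal ideal $A$ it must be zero. Next, $C_L(A)$ is an ideal of $L$; if nonzero it contains the unique minimal ideal $A$, forcing $[A,A]=0$, a contradiction, so $C_L(A)=0$ and the adjoint representation embeds $L\hookrightarrow\mathrm{Der}(A)$. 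Finally, Whitehead's theorem in characteristic zero gives $\mathrm{Der}(A)=\mathrm{ad}(A)\cong A$, and together with $A\subseteq L$ this yields $L=A$.

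The main obstacle is this last identification $L=A$: it is the only step that uses the characteristic-zero hypothesis, and it depends on the nontrivial fact that every derivation of a semisimple Lie algebra in characteristic zero is inner. The rest of the argument is a fairly mechanical application of Lemmas~\ref{l:factor},~\ref{l:unique} and~\ref{l:supp}, together with the structural dichotomy for primitive Lie algebras from \cite{primitive}.
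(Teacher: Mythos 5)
Your proof is correct, but it is considerably more self-contained than the paper's, which disposes of both parts by leaning on the structure theory of primitive Lie algebras developed in \cite{primitive}. For part (i) the paper simply cites \cite[Theorem 1.1 3(a),(c)]{primitive}; you instead derive the type 1 case from Lemma \ref{l:supp} (an abelian supplemented chief factor is complemented) and the type 3 case from the dichotomy ``$C_1=C_2$ or $C_1\cap M=M_L=C_2\cap M$'' used inside the proof of Lemma \ref{l:unique} --- note that for type 3 you could avoid even that citation, since $[A,B]\subseteq A\cap B=0$ gives $[M\cap A,L]=[M\cap A,M+B]\subseteq M\cap A$, so $M\cap A\subseteq M_L=0$ directly. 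For part (ii) the paper does not reduce to $M_L=0$: it takes a nonabelian supplemented chief factor $A/B$ with $B=A\cap M_L$, quotes \cite[Theorem 1.7 2]{primitive} for the simplicity of $L/M_L$ to obtain $L=A+M_L$, and then reads off $M/M_L=(M\cap A+M_L)/M_L\cong (M\cap A)/B=Sec(M)$ by the modular law; you instead pass to $L/M_L$ via Lemma \ref{l:factor} and re-prove the simplicity statement from scratch ($\mathrm{Rad}(A)$ and $C_L(A)$ are $L$-invariant, hence zero, and then $L\hookrightarrow\mathrm{Der}(A)=\mathrm{ad}(A)$ by Whitehead forces $L=A$). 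Both routes are sound; yours buys independence from \cite{primitive} at the cost of invoking Whitehead's theorem, while the paper's is shorter because the hard structural facts are outsourced to the cited results.
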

\begin{proof} \begin{itemize}
\item[(i)] This follows from \cite[Theorem 1.1 3(a),(c)]{primitive}.
\item[(ii)] Let $A/B$ be a nonabelian chief factor that is supplemented by $M$, so $L=A+M$ and $B=A \cap M_L$. Then $L/M_L$ is simple, by \cite[Theorem 1.7 2]{primitive}, which implies that $L=A+M_L$. Hence
\[ \frac{M}{M_L} = \frac{M \cap (A+M_L)}{M_L} = \frac{M \cap A +M_L}{M_L} \cong \frac{M \cap A}{M_L \cap A} = \frac{M \cap A}{B} = Sec(M).
\]
\end{itemize}
\end{proof}

\section{Main results}
First we can state Theorems 3.1, 3.2 and 3.3 of \cite{cideal} in terms of c-sections as follows.

\begin{theor}\label{t:trivial}  Let $L$ be a Lie algebra over a field $F$. Then
\begin{itemize}
\item[(i)] every maximal subalgebra $M$ of $L$ has trivial c-section if and only if $L$ is solvable; and
\item[(ii)] if $F$ has characteristic zero, or is algebraically closed of characteristic greater than 5, then $L$ has a maximal subalgebra with trivial c-section if and only if $L$ is solvable.
\end{itemize}
\end{theor}

\begin{theor}\label{t:char0} Let $L$ be a Lie algebra over a field $F$ of characteristic zero. Then $Sec(M)$ is solvable for all maximal subalgebras $M$ of $L$ if and only if $L=R \dot{+} S$, where $R$ is the (solvable) radical of $L$ and $S$ is a direct sum of simple algebras which are minimal non-abelian or isomorphic to $sl_2(F)$.
\end{theor}
\begin{proof} Suppose first that $Sec(M)$ is solvable for all maximal subalgebras $M$ of $L$, and let $L=R \dot{+} S$ be the Levi decomposition of $L$. Then $Sec(M)$ is solvable for all maximal subalgebras $M$ of $S$, by Lemma \ref{l:factor}. Let $S = S_1 \oplus \ldots \oplus S_n$, where $S_i$ is simple for each $1 \leq i \leq n$. If $M$ contains all $S_i$ apart from $S_j$, then $Sec(M) \cong M \cap S_j$, so every subalgebra of $S_j$ is solvable. It follows from \cite[Theorem 2.2 and the remarks following it]{supsolv} that $S_j$ is minimal non-abelian or isomorphic to $sl_2(F)$ for each $1 \leq j \leq n$.
\par

Suppose conversely that $L$ has the claimed form and let $M$ be a maximal subalgebra of $L$. Every chief factor of $L$ is either abelian or simple, and so every c-section of $M$ is either abelian or isomorphic to a proper subalgebra of one of the simple components of $S$. In either case $Sec(M)$ is solvable. 
\end{proof}

\begin{coro}\label{c:char0}  Let $L$ be a Lie algebra over a field $F$ and suppose that every maximal subalgebra has $c$-index $k$. Then 
\begin{itemize}
\item[(i)] if $k >0$, $L$ must be semisimple.
\medskip

Suppose further that $F$ has characteristic zero. Then
\item[(ii)] every simple ideal of its Levi factor must have all of its maximal subalgebras of dimension $k$;
\item[(iii)] $k=0$ if and only if $L$ is solvable; 
\item[(iv)] $k=1$ if and only if $\sqrt{F} \not \subseteq F$ and $L$ is a direct sum of non-isomorphic three-dimensional non-split simple ideals; and
\item[(v)] $k=2$ if and only if $L$ is a direct sum of non-isomorphic ideals  and either (a) each of these ideals is a minimal non-abelian simple Lie algebra with all maximal subalgebras of dimension $2$, or (b) $\sqrt{F} \subseteq F$ and one of the ideals is isomorphic to $sl_2(F)$, whilst any others are minimal non-abelian simple Lie algebras with all maximal subalgebras of dimension $2$.
\end{itemize}
\end{coro}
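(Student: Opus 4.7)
The plan is to address the five parts in sequence, leaning on the earlier lemmas together with Theorems \ref{t:trivial} and \ref{t:char0}. For (i) I argue the contrapositive: if $L$ is not semisimple, some maximal subalgebra has a trivial c-section. Take a minimal abelian ideal $A$, which exists inside the non-zero solvable radical. If $A \not\subseteq \phi(L)$, choose a maximal $M$ with $A \not\subseteq M$, so $L = M + A$; Lemma \ref{l:supp} then forces $M \cap A = 0$, giving $Sec(M) = 0$. If instead $A \subseteq \phi(L)$, every maximal subalgebra of $L$ contains $A$, so by Lemma \ref{l:factor} the c-indices of $L$ and $L/A$ agree, and I induct on $\dim L$; the induction bottoms out at a semisimple quotient, where a Levi complement to $A$ produces a maximal subalgebra witnessing $A \not\subseteq \phi(L)$ after all, contradicting the case hypothesis.

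For (ii), with $L = R \dot{+} S$ a Levi decomposition and $S = S_1 \oplus \cdots \oplus S_n$, fix $j$ and an arbitrary maximal subalgebra $M_j$ of $S_j$; form $M = R + \sum_{i \ne j} S_i + M_j$. Since $S_j$ is simple, $(M_j)_{S_j} = 0$, and one checks $M_L = R + \sum_{i \ne j} S_i$, so $L/M_L \cong S_j$ is simple and $M$ is of type 2. Lemma \ref{l:prim}(ii) then gives $Sec(M) \cong M/M_L \cong M_j$, so the uniform c-index $k$ forces $\dim M_j = k$ for every maximal subalgebra $M_j$ of $S_j$. Part (iii) is immediate from Theorem \ref{t:trivial}(i).

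For (iv) and (v), since $k \in \{1,2\}$ each $Sec(M)$ has dimension at most $2$ and is therefore solvable, so Theorem \ref{t:char0} gives $L = R \dot{+} S$ with every simple ideal of $S$ either minimal non-abelian or isomorphic to $sl_2(F)$; by (i), $R = 0$. Combined with (ii), every simple summand has all maximal subalgebras of dimension $k$. In (iv), $sl_2(F)$ always contains a $2$-dimensional Borel and so is excluded; the remaining summands must be the $3$-dimensional non-split simple algebras (the only minimal non-abelian simples all of whose maximal subalgebras are $1$-dimensional), whose existence requires $\sqrt{F} \not\subseteq F$. In (v), $sl_2(F)$ may appear precisely when $\sqrt{F} \subseteq F$, so that its only maximal subalgebras are the $2$-dimensional Borels; otherwise it admits $1$-dimensional non-split Cartan normalizers, breaking uniformity. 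In both cases the summands must be pairwise non-isomorphic, for otherwise a diagonal subalgebra of $S_i \oplus S_j$ with $S_i \cong S_j$ is a maximal subalgebra with trivial core, making $L$ primitive of type $3$ for that $M$, so Lemma \ref{l:prim}(i) yields $Sec(M) = 0$, contradicting $k > 0$. The converses are verified by noting that under pairwise non-isomorphism every maximal subalgebra of $\bigoplus S_i$ has the focused form $\bigoplus_{i \ne j} S_i \oplus M_j$ for some $j$ and some maximal $M_j$ in $S_j$, so its c-section is $M_j$ and has dimension $k$.

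The main obstacle is the structural classification feeding (iv) and (v): identifying the minimal non-abelian simple Lie algebras whose maximal subalgebras all have a fixed small dimension, and tracking precisely where the field condition $\sqrt{F} \subseteq F$ or its negation enters through the split versus twisted forms of $sl_2(F)$ and the other three-dimensional simple algebras.
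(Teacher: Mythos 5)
Your proof tracks the paper's own argument almost step for step: (i) via an abelian chief factor that is supplemented and hence, by Lemma \ref{l:supp}, complemented; (ii) by restricting to the simple components of the Levi factor (the paper dismisses this as ``clear'', and your explicit route through Lemma \ref{l:prim}(ii), identifying $M_L = R + \sum_{i \neq j} S_i$ and $Sec(M) \cong M_j$, is a correct way to fill that in); (iii) by quoting Theorem \ref{t:trivial}(i); and (iv)--(v) by pinning down the simple summands (the paper cites its Theorem 3.4 of the ``chief factors'' paper for the $k=1$ case and Theorem \ref{t:char0} for $k=2$, exactly as you do) together with the diagonal-subalgebra argument to exclude isomorphic components. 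Your observation that the diagonal makes $L/M_L$ primitive of type $3$, so that Lemma \ref{l:prim}(i) forces $Sec(M)=0$, is a clean justification of the step the paper only sketches, and your converse direction for (iv)--(v) is the paper's.

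The one place you genuinely diverge is part (i), and there is a gap there. Part (i) is asserted over an \emph{arbitrary} field --- the characteristic-zero hypothesis only enters from (ii) onward --- yet your treatment of the sub-case $A \subseteq \phi(L)$ closes the induction by invoking a Levi complement to $A$, which is only available in characteristic zero. The paper sidesteps this case split entirely: it asserts that a non-trivial radical always gives rise to some abelian chief factor that is supplemented by a maximal subalgebra, and then Lemma \ref{l:supp} upgrades the supplement to a complement, yielding a trivial c-section and $k=0$. If you want to retain your structure, you need a characteristic-free reason why a minimal abelian ideal $A$ with $L/A$ semisimple cannot lie in $\phi(L)$ (equivalently, why the radical cannot be a non-zero Frattini ideal); as written, that step of your argument only covers characteristic zero, while the main case $A \not\subseteq \phi(L)$ and everything else in your proof is fine.
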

\begin{proof} \begin{itemize}
\item[(i)] If $L$ has non-trivial radical, it has an abelian chief factor which is supplemented, and hence complemented, by Lemma \ref{l:supp}, so $k=0$.
\item[(ii)] This is clear.
\item[(iii)] This is Theorem \ref{t:trivial} (i).
\item[(iv)]  Suppose that $k=1$. Then $L$ is semisimple and each simple component has all of its maximal subalgebras one dimensional, by (i) and (ii). It follows that they are three-dimensional simple and $\sqrt{F} \not \subseteq F$, by \cite[Theorem 3.4]{chief}. Moreover, they must be non-split. If there are two that are isomorphic, say $S$ and $\theta(S)$, where $\theta$ is an isomorphism, then the diagonal subalgebra $\{s+ \theta(s): s \in S\}$ is maximal in $S \oplus \theta(S)$. But this together with the simple components other than $S$ and $\theta(S)$ gives a maximal subalgebra $M$ of $L$ with $c$-index $0$ in $L$. 
\par

Conversely, suppose that $L$ is a direct sum of non-isomorphic three-dimensional simple ideals, $S_1 \oplus \ldots \oplus S_n$, and $\sqrt{F} \not \subseteq F$. Let $M$ be a maximal subalgebra of $L$ with $S_i \not \subseteq M$ and $S_j \not \subseteq M$ for some $1 \leq i, j \leq n$ with $i \neq j$. Then $L=M+S_i=M+S_j$ which yields that $M \cap S_i$ and $M \cap S_j$ are ideals of $L$ and hence are trivial. But then $S_i \cong L/M \cong S_j$, a contradiction. It follows that every maximal subalgebra contains all but one of the simple components and hence that $k=1$.
\item[(v)] This is similar to (iv), using Theorem \ref{t:char0} and noting that $sl_2(F)$ has a one-dimensional maximal subalgebra if and only if $\sqrt{F} \not \subseteq F$, by \cite[Theorem 3.4]{chief}.
\end{itemize}
\end{proof}
\bigskip

Note that algebras as described in Corollary \ref{c:char0} do exist as the following example shows. This example was constructed by Gejn (see \cite[Example 3.5]{gejn}).
\begin{ex} Let $L$ be the Lie algebra generated by the matrices
\[ f_1 =  \left( \begin{array}{ccc}
0 & 0 & 0 \\
0 & 0 & -E \\
0 & E & 0 \end{array} \right),
f_2 = \left( \begin{array}{ccc}
0 & 0 & A \\
0 & 0 & 0 \\
-E & 0 & 0 \end{array} \right),
f_3 = \left( \begin{array}{ccc}
0 & -A & 0 \\
E & 0 & 0 \\
0 & 0 & 0 \end{array} \right)
\]
\[
g_1 =  \left( \begin{array}{ccc}
0 & 0 & 0 \\
0 & 0 & -A \\
0 & A & 0 \end{array} \right),
g_2 = \left( \begin{array}{ccc}
0 & 0 & 2E \\
0 & 0 & 0 \\
-A & 0 & 0 \end{array} \right),
g_3 = \left( \begin{array}{ccc}
0 & -2E & 0 \\
A & 0 & 0 \\
0 & 0 & 0 \end{array} \right)
\]
where $A= \left( \begin{array}{cc} 0 & 2 \\ 1 & 0 \end{array} \right)$, $E= \left( \begin{array}{cc} 1 & 0 \\ 0 & 1 \end{array} \right)$ and $0= \left( \begin{array}{cc} 0 & 0 \\ 0 & 0 \end{array} \right)$, with repect to the operation $[,]$, over the rational numbers $\Q$. Then $L$ is simple nonabelian (see \cite[Example 3.5]{gejn}),  and the maximal subalgebras are $\Q f_i + \Q g_i$ for $i=1,2,3$.
\end{ex}

\begin{ex}
Gejn also goes on to construct simple minimal nonabelian Lie algebras over $\Q$ of dimension $3k$ for $k \geq 1$ by putting 
\[ A= \left( \begin{array}{cccccc} 
0 & 0 & 0 & \ldots & 0 & 2 \\
1 & 0 & 0 & \ldots & 0 & 0 \\
0 & 1 & 0 & \ldots & 0 & 0 \\
. & . & . & \ldots & . & . \\
0 & 0 & 0 & \dots & 1 & 0 \end{array} \right)
\]
$E$ as the $k \times k$ identity matrix and $0$ as the $k \times k$ zero matrix (see  \cite[Example 3.6]{gejn}). It is straightforward to check that in these every maximal subalgebra has $c$-index $k$.
\end{ex}

The following corollary is straightforward.

\begin{coro}\label{c:onemax} Let $L=R\dot{+}S$ be a Lie algebra over a field $F$ of characteristic zero, where $R$ is the radical and $S$ is a Levi factor, and suppose that $L$ has a maximal subalgebra with $c$-index $k$. Then
\begin{itemize}
\item[(i)] if $k>0$ then $S \neq0$;
\item[(ii)] $k=1$ if and only if  $\sqrt{F} \not \subseteq F$ and $S$ has a minimal ideal $A$ which is three-dimensional simple;
\item[(iii)] $k>1$ if and only if $S$ has a minimal ideal with a maximal subalgebra of dimension $k$.
\end{itemize}
\end{coro}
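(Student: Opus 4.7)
My plan is to handle all three parts by analyzing a maximal subalgebra $M$ of c-index $k > 0$ through the primitive quotient $L/M_L$, leaning on Lemma \ref{l:prim} throughout. Part (i) is immediate: if $S = 0$ then $L$ is solvable, so Theorem \ref{t:trivial}(i) forces $k = 0$, contradicting the hypothesis $k > 0$.

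For the forward direction of (iii), suppose $M$ has c-index $k > 0$. Lemma \ref{l:prim}(i) excludes types $1$ and $3$, leaving type $2$; the argument in the proof of Lemma \ref{l:prim}(ii) then gives $Sec(M) \cong M/M_L$ and shows $L/M_L$ is simple non-abelian. In particular, the radical $(R + M_L)/M_L$ of $L/M_L$ vanishes, so $R \subseteq M_L$; hence $L/M_L$ is a simple quotient of $L/R \cong S = S_1 \oplus \cdots \oplus S_n$, and must be isomorphic to some $S_i$, with $M_L = R + \bigoplus_{j \neq i} S_j$. Then $M/M_L$ is a maximal subalgebra of $S_i$ of dimension $k$.

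For the converse, given a minimal ideal $S_i$ of $S$ with a maximal subalgebra $N$ of dimension $k$, I will set $M = R + N + \bigoplus_{j \neq i} S_j$. Routine checks (using that distinct simple summands of $S$ centralize each other and $R$ is an ideal) show $M$ is a subalgebra, and its image in $L/R \cong S$ is manifestly maximal, so $M$ is maximal in $L$. One then identifies $M_L = R + \bigoplus_{j \neq i} S_j$ by noting that this is an ideal of $L$ sitting inside $M$, while any ideal of $S$ contained in $M/R$ must lie in $\bigoplus_{j \neq i} S_j$ (since $N$ is proper in the simple $S_i$). Consequently $M/M_L \cong N$ and $M$ is of type $2$, so Lemma \ref{l:prim}(ii) yields c-index $k$. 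This completes (iii); part (ii) then follows by specialising to $k = 1$ and invoking \cite[Theorem 3.4]{chief}, which characterises the simple Lie algebras possessing a one-dimensional maximal subalgebra as precisely the three-dimensional non-split ones over fields with $\sqrt{F} \not\subseteq F$. The main obstacle I anticipate is pinning down $M_L$ precisely in the converse construction, though the argument above makes this routine once one observes that $R \subseteq M_L$ and exploits the ideal structure of the semisimple $S$.
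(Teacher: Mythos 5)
The paper offers no proof of this corollary at all --- it is introduced only with the line ``The following corollary is straightforward'' --- so there is no argument of the author's to compare yours against line by line. Your proof is correct and follows what is surely the intended route, i.e.\ the same machinery used for Theorem \ref{t:char0} and Corollary \ref{c:char0}: a maximal subalgebra $M$ of positive $c$-index cannot be of type $1$ or $3$ by Lemma \ref{l:prim}(i), so it is of type $2$, $Sec(M)\cong M/M_L$ with $L/M_L$ simple, whence $R\subseteq M_L$, the quotient $L/M_L$ is one of the simple summands $S_i$ of $S$, and $M/M_L$ is a $k$-dimensional maximal subalgebra of it; conversely, from a maximal subalgebra $N$ of a simple summand $S_i$ you correctly assemble $M=R+N+\bigoplus_{j\neq i}S_j$, identify $M_L=R+\bigoplus_{j\neq i}S_j$ via the ideal structure of the semisimple $S$, and read the $c$-index off Lemma \ref{l:prim}(ii). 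All of these steps check out.

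One caveat is worth recording. What your argument actually establishes is that the positive values of the $c$-index of maximal subalgebras of $L$ are exactly the dimensions of maximal subalgebras of the minimal ideals of $S$. It does not --- and cannot --- establish the literal ``if'' direction of (iii), namely that the existence of a minimal ideal of $S$ with a $k$-dimensional maximal subalgebra forces $k>1$: a three-dimensional non-split simple ideal over a field with $\sqrt F\not\subseteq F$ has a one-dimensional maximal subalgebra, so as stated parts (ii) and (iii) overlap at $k=1$. This is an infelicity in the corollary's phrasing (the value of $k$ is not uniquely determined by the hypothesis when $S$ has several summands) rather than a gap in your reasoning, but it deserves to be flagged explicitly rather than absorbed into the phrase ``specialising to $k=1$'' at the end of your proof.
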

\bigskip

Recall that a triple $(G,[p],\iota)$ consisting of a restricted Lie algebra $(G,[p])$ and a homomorphism $\iota : L \rightarrow G$ is called a {\em $p$-envelope} of $L$ if (a) $\iota$ is injective and (b) the $p$-algebra generated by $\iota(L)$ equals $G$. If $L$ is finite-dimensional then it has a finite-dimensional $p$-envelope (see, for example, \cite[Section 2.5]{s-f}). Let $(L_p,[p],\iota)$ be a $p$-envelope of $L$. If $S$ is a subalgebra of $L$ we denote by $S_p$ the restricted subalgebra of $L_p$ generated by $\iota(S)$. Then the {\em (absolute) toral rank} of $S$ in $L$, $TR(S,L)$, is defined by
\[ 
TR(S,L) = \hbox{max} \{\hbox{dim}(T) : T \hbox{ is a torus of } (S_p + Z(L_p))/Z(L_p)\}. 
\]
This definition is independent of the $p$-envelope chosen (see \cite{strade1}). We write $TR(L,L) = TR(L)$. A Lie algebra $L$ is {\em monolithic} if it has a unique minimal ideal (the {\em monolith} of $L$). The {\em Frattini ideal}, $\phi(L)$, is the largest ideal contained in every maximal subalgebra of $L$. We put $L^{(0)}=L$, $L^{(n)}=[L^{(n-1)},L^{(n-1)}]$ for $n \in \N$ and $L^{(\infty)}= \cap_{n=0}^{\infty} L^{(n)}$.

\begin{theor}\label{t:nilp} Let $L$ be a Lie algebra over an algebraically closed field $F$ of characteristic $p>0$. Then $Sec(M)$ is nilpotent for every maximal subalgebra $M$ of $L$ if and only $L$ is solvable. 
\end{theor}
\begin{proof} Let $L$ be a minimal non-solvable Lie algebra such that $Sec(M)$ is nilpotent for every maximal subalgebra $M$ of $L$, and let $R$ be the (solvable) radical of $L$. If $L$ is simple then every maximal subalgebra of $L$ is nilpotent, and no such Lie algebra exists over an algebraically closed field. So $L$ has a minimal ideal $A$, and $L/A$ is solvable. If there are two distinct minimal ideals $A_1$ and $A_2$, then $L/A_1$ and $L/A_2$ are solvable, whence $L \cong L/(A_1 \cap A_2)$ is solvable, a contradiction. Hence $L$ is monolithic with monolith $A$. If $A \subseteq R$ then again $L$ would be solvable, so $L$ is semisimple and $\phi(L)=0$. Thus, there is a maximal subalgebra $M$ of $L$ such that $L=M+A$.
\par

Put $C=M \cap A$ which is an ideal of $M$. If  ad\,$a$ is nilpotent for all $a \in A$ then $L$ is solvable, a contradiction. Hence there exists $a \in A$ such that ad\,$a$ is not nilpotent. Let $L=L_0 \dot{+} L_1$ be the Fitting decomposition of $L$ relative to ad\,$a$. Then $L_0 \neq L$ and  $L_1 \subseteq A$, so that if $P$ is a maximal subalgebra containing $L_0$, we have $L=A+P$ and $a \in A \cap P$. We can, therefore, assume that $C \neq 0$.
\par

Then $C$ is nilpotent and $L/A \cong M/C$ is solvable, whence $M$ is solvable. Now $[M,N_A(C)] \subseteq N_A(C)$, so $M+N_A(C)$ is a subalgebra of $L$. But $L=M+N_A(C)$ implies that $C$ is an ideal of $L$, from which $C=A$ and $L$ is solvable, a contradiction. It follows that $M=M+N_A(C)$, and so $N_A(C)=M \cap A = C$, and $C$ is a Cartan subalgebra of $A$. Now $C_p$ is a Cartan subalgebra of $A_p$, by \cite[Lemma]{wilson}, and so there is a maximal torus $T \subseteq A_p$ such that $C_p = C_{L_p}(T)$ (see \cite{seligman}).
\par

 Let $A_0(T)+ \sum_{i \in \Z_p} A_{i \alpha}$ be a $1$-section with respect to $T$. Then every element of $C$ acts nilpotently on $L_0$, the Fitting null-component relative to $T$, and thus so does every element of $C_p$. It follows that $L= L_0+ \sum_{i \in \Z_p} A_{i \alpha}$ so $L^{(\infty)}=A$ is simple with  $TR(A)=1$. We therefore have that 
\[p \neq 2, \hspace{.3cm} A  \in \{sl_2(F), W(1:\underline{1}), H(2:\underline{1})^{(1)}\} \hbox{ if } p>3
\]
\[ \hbox{and } \hspace{.3cm} A \in \{sl_2(F), psl_3(F)\} \hbox{ if } p=3,
\] 
by \cite{premet} and \cite{sk}. But now, $\dim A_{\alpha} = 1$ (by \cite[Corollary 3.8]{bo} for all but $psl_3(F)$, and this is straightforward to check) and $M=L_0 \subset L_0+A_{\alpha} \subset L$, a contradiction. It follows that $L$ is solvable.
\par

The converse is clear.
\end{proof}
\bigskip

A subalgebra $U$ of $L$ is {\em nil} if ad\,$u$ acts nilpotently on $L$ for all $u \in U$. Notice that we cannot replace `nilpotent' in Theorem \ref{t:nilp} by `solvable' or `supersolvable' and draw the same conclusion, as $sl_2(F)$ is a counter-example. However, we can prove the same result with `nilpotent' replaced by the stronger condition `nil' without any restrictions on the field $F$.

\begin{theor}\label{t:res} Let $L$ be a Lie algebra over any field $F$. Then $Sec(M)$ is nil for every maximal subalgebra $M$ of $L$ if and only if $L$ is solvable. 
\end{theor}
\begin{proof}  Let $L$ be a minimal non-solvable Lie algebra such that $Sec(M)$ is nil for every maximal subalgebra $M$ of $L$. If $L$ is simple then every maximal subalgebra of $L$ is nil. It follows that every element of $L$ is nil and $L$ is nilpotent, by Engel's Theorem. Hence no such Lie algebra exists. So, arguing as in paragraphs $1$ and $2$ of Theorem \ref{t:nilp} above, $L$ is monolithic with monolith $A$, $L/A$ is solvable, and there is a maximal subalgebra $M$ of $L$ such that $L=M+A$ with an element $a \in M \cap A$ such that ad\,$a$ is not nilpotent. But this is a contradiction, since $A \cap M=Sec(M)$ is nil.
\par

Once again, the converse is clear.
\end{proof}
\bigskip

Let $(L,[p])$ be a restricted Lie algebra. Recall that an element $x \in L$ is called {\em $p$-nilpotent} if there exists an $n \in \N$ such that $x^{[p]^n}=0$.  Then we have the following immediate corollary.

\begin{coro}\label{c:res} Let $L$ be a restricted Lie algebra over a field $F$ of characteristic $p>0$. Then $Sec(M)$ is $p$-nilpotent for every maximal subalgebra $M$ of $L$ if and only if $L$ is solvable. 
\end{coro}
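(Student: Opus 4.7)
The plan is to deduce Corollary \ref{c:res} immediately from Theorem \ref{t:res}. The key fact is that in any restricted Lie algebra, iterating the defining axiom $(\hbox{ad}\,x)^{p}=\hbox{ad}(x^{[p]})$ gives $(\hbox{ad}\,x)^{p^n}=\hbox{ad}(x^{[p]^n})$, so any $p$-nilpotent element of $L$ is automatically ad-nilpotent on $L$. This is exactly what is needed to promote the hypothesis ``$p$-nilpotent'' to ``nil'' in the sense of Theorem \ref{t:res}.

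For the forward direction I would repeat the reduction in the proof of Theorem \ref{t:res}: if $L$ were a minimal non-solvable counterexample, it would be monolithic with monolith $A$ and $L/A$ solvable, and there would exist a maximal subalgebra $M$ of $L$ with $L=M+A$ and an element $a\in M\cap A$ whose ad action on $L$ is not nilpotent. The relevant chief factor here is $A/0$, so $D=0$ and $Sec(M)=M\cap A$. The hypothesis that $Sec(M)$ is $p$-nilpotent therefore reduces to the statement that every $x\in M\cap A$ satisfies $x^{[p]^n}=0$ in $L$ for some $n$. The restricted identity above then forces $\hbox{ad}\,x$ to be nilpotent on $L$, so $Sec(M)$ is in fact nil, contradicting the choice of $a$. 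Hence no such counterexample exists and $L$ is solvable.

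The converse is as in Theorem \ref{t:res}: when $L$ is solvable, every chief factor of $L$ is abelian, so by Lemma \ref{l:supp} every supplementing maximal subalgebra is complementing, whence $Sec(M)=0$ is trivially $p$-nilpotent.

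The only point I expect to need a little care is the interpretation of ``$p$-nilpotent'' for the quotient c-section $(M\cap C)/D$, but in the minimal-counterexample configuration produced by the reduction we always have $D=0$, so the hypothesis collapses to the condition that the elements of $M\cap A$ are $p$-nilpotent as elements of $L$, and nothing beyond the restricted-algebra identity and Theorem \ref{t:res} is required.
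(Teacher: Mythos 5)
Your proposal is correct and matches the paper's argument, which is precisely the one-line observation that a $p$-nilpotent subalgebra is nil (via $(\mathrm{ad}\,x)^{p^n}=\mathrm{ad}(x^{[p]^n})$), after which Theorem \ref{t:res} applies directly. The extra re-run of the minimal-counterexample reduction is unnecessary but harmless.
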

\begin{proof} Simply note that that a $p$-nilpotent subalgebra is nil.
\end{proof}

\end{document}